\crefname{ineq}{inequality}{inequalities}
\newtheorem{theorem}{Theorem}[section]
\newtheorem{lemma}[theorem]{Lemma}
\newtheorem{definition}[theorem]{Definition}
\newtheorem{proposition}[theorem]{Proposition}
\newtheorem{conjecture}[theorem]{Conjecture}
\newcommand\codim{{\mathrm{codim}}}
\newcommand\SR{{\mathrm{SR}}}
\newcommand\End{{\mathrm{End}}}
\newcommand\Hom{{\textnormal{Hom}}}
\newcommand\J{\textnormal{J}}
\newcommand\SL{{\mbox{SL}}}
\newcommand\PSL{\textnormal{PSL}}
\newcommand\C{{\mathbb{C}}}
\newcommand\F{{\mathbb{F}}}
\newcommand\irr{{\mathop\textup{Irr}}}
\newcommand\remove[1]{{}}
\title{A Note on Slice Rank and Matchings in Groups}
\author{Kevin Pratt}
\begin{document}

\maketitle 
\begin{abstract}
A multiplicative 3-matching in a group $G$ is a triple of sets $\{a_i\}, \{b_i\}, \{c_i\} \subset G$ such that $a_ib_jc_k = 1$ if and only if $i=j=k$. Here we record the fact that $\PSL(2,p)$ has no multiplicative 3-matching of size greater than $O(p^{8/3})$, yet the slice rank of its group algebra's multiplication tensor is at least $\Omega(p^3)$ over any field. This gives a negative answer to a conjecture of Petrov.
\end{abstract}
\section{Introduction}
A \emph{multiplicative 3-matching} in a finite group $G$, hereon abbreviated to a 3-matching, is a triple of subsets $\{a_i\}_{i=1}^m, \{b_i\}_{i=1}^m, \{c_i\}_{i=1}^m$ of $G$ such that $a_ib_jc_k=1 \iff i=j=k$. Let $M(G)$ denote the largest size\footnote{By size we mean the parameter $m$.} of a 3-matching in $G$. This quantity is of interest in additive combinatorics, as finite groups provide a model setting for understanding 3-term arithmetic-progression-free sets in the integers. It also has connections to algorithms for fast matrix multiplication \cite{cohn2005group}.

The polynomial method of Croot, Lev, and Pach \cite{croot2017progression} and Ellenberg and Gijswijt \cite{ellenberg2017large}, and its formulation in terms of \emph{slice rank} due to Tao \cite{tao}, is a powerful tool for establishing upper bounds on $M(G)$. Remarkably, it gives an asymptotically tight bound on $M(G)$ in the case of $\F_p^n$ with $p>2$ a fixed prime. Specifically, it shows that for a certain $c_p < p$ we have $M(\F_p^n) \le c_p^n$, and it is also known that $M(\F_p^n) \ge c_p^{(1-o(1))n}$ \cite{kleinberg2016growth}. This raises the following question, previously asked in \cite{petrov2016combinatorial}: is the slice rank bound on $M(G)$ always tight?

We now make these notions formal. We assume throughout that $k$ is an algebraically closed field\footnote{This is without loss of generality for us, since slice rank can only decrease under field extensions and we will prove a slice rank lower bound.}. Following \cite[Lemma 1(iv)]{tao2}, the slice rank of a trilinear form $T : U_1 \times U_2 \times U_3 \to k$ is 
\[\SR(T) = \max_{\substack{V_i  \le U_i : \\ T(V_1,V_2,V_3) = 0}} \codim(V_1) +\codim(V_2) + \codim(V_3).\]
This can be thought of as an analogue of the ``codimension of the kernel" definition of matrix rank for trilinear forms. To use slice rank to prove bounds on $M(G)$, we consider the multiplication tensor  $T_{k[G]} : (k^{|G|})^{\times 3} \to k$ of the group algebra $k[G]$, defined as $T_{k[G]} = \sum_{g,h \in G} x_g y_h z_{gh}$. The key fact is that $\SR(T_{k[G]})$ is at least $M(G)$ for any $k$ \cite[Lemma~1]{tao}; this is an analogue of the fact that the rank of a matrix is at least the size of the largest identity-submatrix it contains. Hence upper bounds on $\SR(T_{k[G]})$ imply upper bounds on $M(G)$. Motivated by this, we make the following definition.

\begin{definition}\label{sr}
$\SR(G) = \min_k \SR(T_{k[G]})$.
\end{definition}

Here the minimum is taken over all algebraically closed fields; crucially, the characteristic of $k$ can be arbitrary. In the case that $G = \F_p^n$ for example, $\SR(T_{\C[\F_p^n]}) = p^n$ \cite[Corollary b.17]{blasiak2017groups} but $\SR(T_{\F_p[\F_p^n]}) \le c_p^n$. This leads to the following conjecture, which is slightly weaker than one appearing in \cite{petrov2016combinatorial}:\footnote{The conjecture of \cite{petrov2016combinatorial} asked if $M(G)$ is roughly the sum of codimensions of subspaces multiplying to 0 in $k[G]$, whereas \Cref{pconj} is equivalent to asking if $M(G)$ is roughly the sum of codimensions of subspaces whose product merely vanishes on the coefficient of 1 in $k[G]$.}

\begin{conjecture}\label{pconj}
$\SR(G) \le M(G) \cdot |G|^{o(1)}$. 
\end{conjecture}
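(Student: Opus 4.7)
The plan is to refute \Cref{pconj} by producing a family of groups where the slice-rank bound on $M(G)$ is provably loose. I want groups that are simultaneously (a) highly quasirandom, so that character theory forces $M(G)$ to be noticeably sublinear in $\abs{G}$, and (b) have only a small Jacobson radical in $k[G]$ for every characteristic, so that a large semisimple quotient still forces $\SR(T_{k[G]})$ to be of order $\abs{G}$. The natural candidates are the simple groups $\PSL(2,p)$ as $p\to\infty$, whose smallest nontrivial complex irreducible representation has dimension $(p-1)/2$.

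For the upper bound $M(\PSL(2,p)) \le O(p^{8/3})$: given a $3$-matching of size $m$, the number $N$ of triples $(i,j,k)$ with $a_ib_jc_k=1$ equals exactly $m$. By Fourier inversion on $G$, $N$ equals $m^3/\abs{G}$ (the trivial-representation contribution) plus an error over the nontrivial irreducibles $\rho$. I would bound this error term using the matrix inequality $\abs{\Tr(XYZ)} \le \|X\|_{\text{op}}\|Y\|_{\text{HS}}\|Z\|_{\text{HS}}$, Cauchy--Schwarz on the spectral sum, and the Plancherel-derived bound $\|\hat{f}_A(\rho)\|_{\text{op}}^2 \le \abs{G}\,\abs{A}/d_\rho$, obtaining $\abs{m - m^3/\abs{G}} \le m^{3/2}\sqrt{\abs{G}/d}$ with $d=(p-1)/2$. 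Rearranging yields $m \le O(\abs{G}/d^{1/3}) = O(p^{8/3})$.

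For the lower bound $\SR(\PSL(2,p)) \ge \Omega(p^3)$ over every algebraically closed $k$, I would use the general inequality
\[ \SR(T_{k[G]}) \;\ge\; \SR(T_{k[G]/J(k[G])}) \;=\; \dim(k[G]/J(k[G])). \]
The first step holds because, after choosing a vector-space splitting, the semisimple quotient's multiplication tensor is obtained by restricting $T_{k[G]}$ to a subspace, and restriction cannot increase slice rank. The equality uses that over algebraically closed $k$, $k[G]/J \cong \prod_i M_{d_i}(k)$ has multiplication tensor equal to the disjoint-variable direct sum of matrix-multiplication tensors $\langle d_i,d_i,d_i\rangle$, whose slice ranks $d_i^2$ sum to $\dim(k[G]/J)$. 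It then suffices to verify $\dim(k[G]/J) \ge \Omega(p^3)$ in every characteristic: when $\operatorname{char} k \nmid \abs{G}$, trivially $\dim = \abs{G} \sim p^3/2$; when $\operatorname{char} k = p$, the simple $\Fpbar[\PSL(2,p)]$-modules are the even-weight Steinberg/Frobenius modules of dimensions $1,3,5,\ldots,p$, contributing $\sum_{i=0}^{(p-1)/2}(2i+1)^2 \sim p^3/6$; and the remaining cases $\operatorname{char} k = \ell$ with $\ell \mid p^2-1$, $\ell \neq p$, are handled by a parallel Brauer-character count.

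The principal obstacle is the slice-rank lower bound for matrix-multiplication tensors---specifically, that $\SR(\langle n,n,n\rangle) \ge n^2$ and that slice rank is additive for direct sums on disjoint variables. This structural fact is what distinguishes the non-abelian case from the abelian one and prevents a Croot--Lev--Pach-style degree reduction from collapsing $\SR$ below $\dim(k[G]/J)$ in the bad characteristic, unlike what happens for $\F_p^n$ in characteristic $p$.
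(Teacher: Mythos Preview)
Your refutation is correct and follows essentially the same route as the paper: exhibit $G=\PSL(2,p)$, bound $M(G)\le O(p^{8/3})$ via quasirandomness, and bound $\SR(G)\ge \dim k[G]/\J(k[G])=\Omega(p^3)$ by a case analysis on $\operatorname{char}k$, using that the semisimple quotient is a direct sum of matrix algebras whose multiplication tensor has full slice rank. The only differences are in packaging: the paper gets the $M$-bound by extracting a product-free triple of size $\lfloor m/3\rfloor$ from a $3$-matching and then quoting Gowers's theorem (your Fourier estimate is precisely Gowers's proof run directly on the matching), and for the cross-characteristic case $\ell\mid p^2-1$ the paper makes your ``parallel Brauer-character count'' concrete by showing there are at least $(p-5)/4$ $\ell$-regular conjugacy classes (since $\ell$ divides the order of at most one of the two coprime tori) and then invoking the Landazuri--Seitz lower bound $(p-1)/2$ on the dimension of any nontrivial irreducible in characteristic $\ell\neq p$.
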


In this work we note that $\PSL(2,p)$ has no large 3-matchings but has high slice rank over any field, so \Cref{pconj} is false. Both of these facts are in large part due to the lower bound of \cite{landazuri1974minimal} on the dimensions of nontrivial irreducible representations of $\PSL(2,p)$. The fact that $\PSL(2,p)$ has no large 3-matching follows almost immediately from Gowers's result on quasirandom groups \cite{gowers2008quasirandom}. We now give a quick proof of this. 

\begin{proposition}\label{easydir}
$M(\PSL(2,p)) \le O(p^{8/3})$.
\end{proposition}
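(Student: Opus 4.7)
The plan is to apply Gowers's mixing lemma for quasirandom groups, using that $\PSL(2,p)$ has no small nontrivial irreducible representations by Landazuri--Seitz. Specifically, every nontrivial irreducible complex representation of $\PSL(2,p)$ has dimension $\Omega(p)$, so $\PSL(2,p)$ is $D$-quasirandom for some $D = \Omega(p)$.

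Gowers's theorem states that for any subsets $A,B,C$ of a $D$-quasirandom finite group $G$, the number $T$ of triples $(a,b,c) \in A \times B \times C$ with $abc = 1$ satisfies
\[
\left| T - \frac{|A||B||C|}{|G|} \right| \;\le\; \sqrt{\frac{|A||B||C| \cdot |G|}{D}}.
\]
The proof strategy is to apply this directly: given a 3-matching of size $m$ in $G = \PSL(2,p)$, let $A = \{a_i\}$, $B = \{b_i\}$, $C = \{c_i\}$, each of cardinality $m$. The defining property of a 3-matching forces $T = m$ exactly, since $a_ib_jc_k = 1$ only when $i=j=k$.

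Substituting, one gets $\frac{m^3}{|G|} - m \le \sqrt{\frac{m^3 |G|}{D}}$. I would split into two cases. If $m^3/|G| \le 2m$, then $m \le \sqrt{2|G|} = O(p^{3/2})$, which is well within the claimed bound. Otherwise, $\frac{m^3}{|G|} - m \ge \frac{m^3}{2|G|}$, and squaring yields $m^3 \le 4|G|^3/D$. Plugging in $|G| = \Theta(p^3)$ and $D = \Omega(p)$ produces $m \le O((p^9/p)^{1/3}) = O(p^{8/3})$.

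The only genuine ingredients are Gowers's quasirandomness theorem and the Landazuri--Seitz representation dimension bound; neither step presents a real obstacle beyond citing the correct form of Gowers's inequality. The most delicate point is simply keeping track of the exponents so the dominant case yields the stated $p^{8/3}$ (and not a stronger bound, which would require a better lower bound on the minimal representation dimension than $\Omega(p)$).
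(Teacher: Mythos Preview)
Your proof is correct and rests on the same underlying ingredient as the paper's---Gowers's quasirandomness for $\PSL(2,p)$, via the Landazuri--Seitz bound on minimal representation dimension---but you deploy it slightly differently. The paper first observes that a 3-matching of size $m$ yields a \emph{product-free} triple of sets each of size $\lfloor m/3\rfloor$ (take $\{a_i\}_{i\le m'}$, $\{b_j\}_{m'<j\le 2m'}$, $\{c_k\}_{k>2m'}$; the disjoint index ranges guarantee $a_ib_jc_k\neq 1$), and then quotes Gowers's theorem that product-free triples in $\PSL(2,p)$ have size $O(p^{8/3})$. You instead apply the quantitative mixing form of Gowers's lemma directly to the full sets $A,B,C$, using that the number of solutions to $abc=1$ is exactly $m$. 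Your route is marginally cleaner (no splitting trick, no factor-of-3 loss), while the paper's is more black-box (it cites the product-free corollary rather than the counting inequality itself); the two are otherwise the same argument.
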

\begin{proof}
A triple of subsets $A,B,C \subset G$ is called product-free if $abc \neq 1$ for all $a \in A, b \in B, c \in C$. If $A,B,C$ is a 3-matching of size $m$, then there is a product-free triple of sets in $G$ of size $m':=\lfloor m/3 \rfloor$ consisting of $\{a_i\}_{i=1}^{m'}, \{b_i\}_{i=m' + 1}^{2m'}, \{c_i\}_{i=2m'+1}^{3m'}$. In \cite{gowers2008quasirandom} it is shown that $\PSL(2,p)$ does not contain product-free subsets larger than $O(p^{8/3})$, so the proposition follows.
\end{proof}

In the next section we the following slice rank lower bound.

\begin{theorem}\label{main}
$\SR(\PSL(2,p)) \ge \Omega(p^3)$.
\end{theorem}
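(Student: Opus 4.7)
The plan is to reduce the slice rank of $T_{k[G]}$ to the dimension of the semisimple quotient of $k[G]$, and then apply the representation theory of $\PSL(2,p)$.

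First, I would establish the general principle that for any surjective homomorphism $\pi\colon A \twoheadrightarrow B$ of finite-dimensional unital $k$-algebras, $\SR(T_A) \ge \SR(T_B)$. Using the reformulation
\[
\SR(T_A) = \min_{V_1, V_2 \le A}\bigl(\codim V_1 + \codim V_2 + \dim V_1 V_2\bigr),
\]
if $V_1, V_2 \le A$ achieve the minimum, then their images $W_i := \pi(V_i)$ in $B$ satisfy $\codim_B W_i \le \codim_A V_i$ and $\dim W_1 W_2 = \dim \pi(V_1 V_2) \le \dim V_1 V_2$, so they witness $\SR(T_B) \le \SR(T_A)$. Applied to the projection $k[G] \twoheadrightarrow k[G]/J$ onto the semisimple quotient (with $J$ the Jacobson radical), this yields $\SR(T_{k[G]}) \ge \SR(T_{k[G]/J})$.

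Next I would show that $\SR(T_A) = \dim A$ for any semisimple finite-dimensional algebra $A$ over algebraically closed $k$. The upper bound is immediate (take $V_1 = V_2 = A$); via the characterization above, the lower bound is equivalent to the Pl\"unnecke-type inequality $\dim V_1 V_2 \ge \dim V_1 + \dim V_2 - \dim A$ for all subspaces $V_1, V_2 \le A$. I expect this to be the main obstacle. When $V_1$ or $V_2$ contains an invertible element, the inequality follows because right multiplication by such an element is a bijection; the hard case is when both subspaces consist entirely of singular matrices, where one can invoke Dieudonn\'e-type bounds on the dimension of such subspaces. For $A = \bigoplus_i M_{n_i}(k)$, an additional care is needed because general subspaces do not split as direct sums across blocks.

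Finally, Artin--Wedderburn gives $k[G]/J \cong \bigoplus_i M_{d_i}(k)$ with $d_i$ the dimensions of the irreducible $k[G]$-modules, and it remains to verify $\sum_i d_i^2 = \Omega(p^3)$ in every characteristic $\ell$. The Landazuri--Seitz bound ensures every nontrivial irreducible has dimension at least $(p-1)/2$. In characteristic $0$ or $\ell \nmid |G|$, $\sum_i d_i^2 = |G| = \Theta(p^3)$. In characteristic $p$, the irreducibles are the Steinberg-type $\SL_2$-modules of odd dimensions $1, 3, 5, \ldots, p$, giving $\sum_i d_i^2 = p(p+1)(p+2)/6$. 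In the remaining modular cases ($\ell \mid (p\pm 1)/2$ or $\ell = 2$), a count of $\ell$-regular conjugacy classes shows that $\PSL(2,p)$ still has $\Omega(p)$ irreducible $k[G]$-modules, each of dimension at least $(p-1)/2$, so $\sum_i d_i^2 \ge \Omega(p) \cdot ((p-1)/2)^2 = \Omega(p^3)$, which combined with Steps~1--2 completes the proof.
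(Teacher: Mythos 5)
Your overall strategy coincides with the paper's: pass to the semisimple quotient $k[G]/\J(k[G])$, show that a direct sum of matrix algebras has maximal slice rank, and then count irreducible representations in each characteristic using the Landazuri--Seitz bound. Your Step 3 matches the paper's case analysis essentially verbatim (the odd-degree symmetric powers in characteristic $p$, and the count of $\ell$-regular conjugacy classes via Brauer's theorem in the remaining modular cases), and your Step 1 is a correct and somewhat slicker derivation of the paper's quotient lemma, working directly with the characterization $\SR(T_A)=\min_{V_1,V_2\le A}(\codim V_1+\codim V_2+\dim V_1V_2)$ rather than with the general monotonicity of slice rank under linear maps applied to each factor.

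The one genuine gap is Step 2, which you yourself flag as the main obstacle. The statement that $\SR(T_A)=\dim A$ for $A=\bigoplus_i M_{d_i}(k)$ is precisely \cite[Proposition B.6]{blasiak2017groups}, which the paper simply cites; it is a real theorem, not a routine verification, and your sketch does not close it. In the case where $V_1$ and $V_2$ consist entirely of singular elements, the Dieudonn\'e-type bound only caps $\dim V_i$ from above (by $d^2-d$ in $M_d(k)$); this weakens the right-hand side of the target inequality $\dim V_1V_2\ge\dim V_1+\dim V_2-\dim A$ but provides no lower bound on $\dim V_1V_2$, so it cannot by itself rule out, say, $V_1V_2$ being very small while $\dim V_1+\dim V_2$ still exceeds $\dim A$. (A working argument in that regime runs differently: if $V_1V_2$ is small, the images of the elements of $V_2$ lie in a small subspace annihilated by $V_1$, which simultaneously constrains $\dim V_1$ and $\dim V_2$.) Likewise, the passage from a single matrix block to a direct sum is not formal, since subspaces need not split blockwise. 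If you replace this step by a citation to the proposition above, the argument is complete and is essentially the paper's proof.
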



\section{Proof of \Cref{main}}
If $A$ is a finite dimensional algebra over $k$, we let $T_A \in A^* \otimes A^* \otimes A$ denote its multiplication tensor.  If $e_1, \ldots, e_n$ is a basis of $A$ with dual basis $e_1^*, \ldots, e_n^*$, this tensor is given in coordinates by $\sum_{1 \le i,j \le n} e_i^* \otimes e_j^* \otimes (e_i \cdot e_j)$. We can view this as a trilinear form, for instance by linearly mapping $e_i^* \otimes e_j^* \otimes e_k$ to the monomial $x_iy_jz_k$, and define its slice rank as in the introduction. We write $\SR(A)$ for the slice rank of the multiplication tensor of $A$.

Now we recall some basic facts about slice rank, namely that it is nonincreasing under linear transformations, and that the slice rank of an algebra is nonincreasing under quotients.

\begin{lemma}\cite[Lemma 3]{tao2}\label{gln}
Let $T = \sum c_{ijk} u_i \otimes v_j \otimes w_k \in U \otimes V \otimes W$, and let $A \in \Hom(U,U'), B \in \Hom(V,V'), C \in \Hom(W,W')$. Then $\SR(\sum c_{ijk} A(u_i) \otimes B(v_j) \otimes C(w_k)) \le \SR(T)$.
\end{lemma}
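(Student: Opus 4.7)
My plan is to invoke the equivalent characterization of slice rank as the minimum number of rank-one ``slices'' in a tensor decomposition. Concretely, $\SR(T)$ equals the smallest $r$ for which one can write
\[ T = \sum_{i=1}^{r_1} u_i \otimes S_i + \sum_{j=1}^{r_2} v_j \otimes R_j + \sum_{k=1}^{r_3} w_k \otimes Q_k \]
with $r_1+r_2+r_3=r$, where the three sums collect first-, second-, and third-factor slices respectively (so $u_i \in U$, $S_i \in V\otimes W$, and analogously in the other two sums). The equivalence of this slice-decomposition definition with the codimension formula in the introduction is Lemma~1(iv) of \cite{tao2}, which I will take as a black box.

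Given such an optimal decomposition, the argument is immediate: apply $A \otimes B \otimes C$ term-by-term. A first-factor slice transforms as
\[ u_i \otimes S_i \longmapsto A(u_i) \otimes (B \otimes C)(S_i), \]
which remains a first-factor slice, now living in $U' \otimes V' \otimes W'$. The second- and third-factor slices transform analogously. Summing exhibits the pushed-forward tensor $\sum c_{ijk} A(u_i) \otimes B(v_j) \otimes C(w_k)$ as a sum of at most $\SR(T)$ slices, giving the claimed inequality.

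The only real content here is the observation that the class of slices is preserved under applying linear maps factor-by-factor, so there is no serious obstacle. The mild technical point one must avoid is trying to argue directly from the codimension definition: if one naively pulls back vanishing subspaces $V'_i \le (U'_i)^*$ via the dual maps $A^*, B^*, C^*$, the pushforwards $A^* V'_1$ etc.\ need not have codimension at least $\codim V'_i$ when $A,B,C$ change the ambient dimension. This can be repaired by factoring each map into a surjection followed by an injection and handling the two cases separately, but passing through the slice-decomposition characterization sidesteps the issue entirely.
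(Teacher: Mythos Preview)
The paper does not supply its own proof of this lemma; it simply quotes the result from \cite{tao2}. Your argument via the slice-decomposition characterization is correct and is the standard proof: apply $A\otimes B\otimes C$ to an optimal decomposition of $T$ into slices and observe that each term remains a slice. Your remark about the awkwardness of arguing directly from the codimension formulation is also well taken.
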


\begin{lemma}\label{quotients}
If $I$ is a two-sided ideal of $A$, then $\SR(A/I) \le \SR(A)$.
\end{lemma}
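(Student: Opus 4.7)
The plan is to apply \Cref{gln} with linear maps that transform $T_A$ into $T_{A/I}$. Let $\pi: A \to A/I$ denote the quotient algebra homomorphism. Since $k$ is a field, $\pi$ admits a linear (not necessarily multiplicative) section $s: A/I \to A$ satisfying $\pi \circ s = \mathrm{id}_{A/I}$. I would use $\pi$ on the third tensor factor of $T_A \in A^* \otimes A^* \otimes A$, and use the dual map $s^*: A^* \to (A/I)^*$, $\phi \mapsto \phi \circ s$, on each of the first two factors.

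Applying \Cref{gln} with these three maps produces a tensor $T' \in (A/I)^* \otimes (A/I)^* \otimes (A/I)$ with $\SR(T') \le \SR(T_A) = \SR(A)$. The remaining step is to verify that $T'$ equals $T_{A/I}$. Starting from the coordinate formula $T_A = \sum_{i,j} e_i^* \otimes e_j^* \otimes (e_i e_j)$ in some basis $e_1,\dots,e_n$ of $A$, evaluating $T'$ as a trilinear form at $(\bar a, \bar b, \psi) \in (A/I) \times (A/I) \times (A/I)^*$ gives
\[
T'(\bar a, \bar b, \psi) = \sum_{i,j} e_i^*(s(\bar a)) \, e_j^*(s(\bar b)) \, \psi(\pi(e_i e_j)) = \psi\bigl(\pi(s(\bar a) \, s(\bar b))\bigr) = \psi(\bar a \, \bar b),
\]
where the last equality is the one place that actually uses the algebra structure: $\pi$ is a ring homomorphism, so $\pi(s(\bar a) s(\bar b)) = \pi(s(\bar a)) \pi(s(\bar b)) = \bar a \bar b$. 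This matches $T_{A/I}(\bar a, \bar b, \psi)$, yielding $\SR(A/I) \le \SR(A)$.

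I do not anticipate any real obstacle here. The only point requiring mild care is picking the maps in the right directions: one uses the section $s$ going \emph{up} to define $s^*$ on the dual factors, while $\pi$ itself goes down on the third factor. The resulting tensor is independent of the choice of $s$, because $\pi$ is multiplicative; this independence is exactly what makes the construction produce $T_{A/I}$ rather than something depending on the splitting.
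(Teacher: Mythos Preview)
Your proof is correct and takes essentially the same approach as the paper: both apply \Cref{gln} with maps built from a linear splitting of the quotient on the two dual factors and the quotient map on the third factor, turning $T_A$ into $T_{A/I}$. The paper phrases this in coordinates via a basis subset $S$ (which implicitly fixes a section $s$), while you work coordinate-free; your explicit use of $\pi$ on the third factor and the verification via the ring-homomorphism identity $\pi(s(\bar a)s(\bar b))=\bar a\,\bar b$ make the argument especially transparent.
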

\begin{proof}
Let $\varphi : A \to A/I$ be the quotient map. Let $e_1, \ldots, e_n$ be a basis for $A$. Since $\varphi$ is an onto linear map, there exists $ S \subseteq [n]$ so that $\{\varphi(e_{i})\}_{i \in S}$ is a basis of $A/I$. Let $P : A \to A/I$ be given by $P(e_i) = \varphi(e_i)$ for $i \in S$, and $P(e_i) = 0$ if $i \notin S$. Similarly define $P' : A^* \to (A/I)^*$ by $P'(e_i^*) = P(e_i)^*$. Applying $P'$ to the first two factors of $T_A$ and $P$ to the third, we obtain $T_{A/I}$. By \Cref{gln} this proves the claim.
\end{proof}
For an algebra $A$, we denote by $\irr(A)$ the set of non-isomorphic irreducible representations of $A$ over $k$. Recall that in the ordinary (i.e.,~characteristic 0) representation theory of finite groups, representations are completely reducible, and in particular the group algebra $k[G]$ is isomorphic to a direct sum of matrix algebras. While this is false when the characteristic of the field divides the order of the group, we still have the following. 

\begin{definition}
The radical of $A$, denoted $\J(A)$, is the two-sided ideal of all elements of $A$ which act by 0 on all irreducible representations of $A$.
\end{definition}

\begin{lemma}\cite[Theorem 2.12]{etingof2011introduction}\label{keylem}
Let $A$ be a finite-dimensional algebra. Then 
\[A/\J(A) \cong \bigoplus_{V \in \irr(A)} \End(V).\]
\end{lemma}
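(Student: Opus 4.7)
The plan is to construct the natural homomorphism $\rho : A \to \bigoplus_{V \in \irr(A)} \End(V)$ that packages the $A$-action on each irreducible, show that its kernel is exactly $\J(A)$, and then establish surjectivity via the Jacobson density theorem applied to the semisimple module $M = \bigoplus_{V \in \irr(A)} V$. Two preliminary facts are needed: since $A$ is finite-dimensional, every simple $A$-module is finite-dimensional (being a quotient of the regular left module $A$), and $\irr(A)$ is itself finite (after proving surjectivity of $\rho$ onto a product of matrix algebras, this is immediate from $\dim A < \infty$; one can also see it directly by bounding the number of independent simple factors of a composition series of $A$).

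By the very definition of $\J(A)$ as the annihilator of every irreducible, $\ker \rho = \J(A)$, so $\rho$ descends to an injection $\bar\rho : A/\J(A) \into \bigoplus_V \End(V)$. Thus the content of the lemma reduces to the surjectivity of $\bar\rho$. This is where the hypothesis that $k$ is algebraically closed comes in: by Schur's lemma $\End_A(V) \iso k$ for every simple $V$, and $\Hom_A(V,W) = 0$ for non-isomorphic simples $V, W$.

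To run the density argument, set $M = \bigoplus_{V \in \irr(A)} V$ and $D := \End_A(M)$. The two Schur computations give $D \iso k^{|\irr(A)|}$, acting on $M$ by independent scalars on each summand. A $D$-linear endomorphism of $M$ must therefore preserve each summand $V$ and act $k$-linearly on it, so $\End_D(M) = \bigoplus_V \End(V)$. The Jacobson density theorem now says that the image of $A$ in $\End_k(M)$ is dense, in the finite topology, in $\End_D(M)$; since we are in finite dimensions, density upgrades to equality, and this is exactly the surjectivity of $\rho$ onto $\bigoplus_V \End(V)$.

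The main obstacle is the surjectivity step, since injectivity is essentially a restatement of the definition of $\J(A)$. A tempting hands-on alternative is to invoke Burnside's theorem to obtain surjectivity of each single map $A \to \End(V)$, and then try to separate components using central idempotents—but manufacturing those idempotents already amounts to the Wedderburn decomposition we are trying to prove, so this route is nearly circular. Handling all irreducibles simultaneously via Jacobson density on $M$ sidesteps this and is the cleanest approach; the only delicate point is verifying the hypotheses (semisimplicity of $M$ and the computation of $\End_D(M)$) and the collapse of topological density to equality in finite dimensions.
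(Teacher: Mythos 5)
Your proof is correct and complete: the kernel computation is immediate from the paper's definition of $\J(A)$, and the density-theorem argument for surjectivity (with Schur's lemma over the algebraically closed $k$ giving $D \iso k^{|\irr(A)|}$ and hence $\End_D(M) = \bigoplus_V \End(V)$, with density collapsing to equality in finite dimensions) is sound. The paper gives no proof of this lemma, citing it to Etingof et al.\ (Theorem 2.12), and your argument is essentially the one in that reference, so there is nothing to compare beyond noting the match.
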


We will use the following fact, which says that the slice rank of direct sums of matrix multiplication tensors is maximal.
\begin{lemma}\label{mmsr}\cite[Proposition B.6]{blasiak2017groups}
For any field $k$, $\SR(\bigoplus_{i=1}^m \End(k^{d_i})) = \sum_{i=1}^m d_i^2$.
\end{lemma}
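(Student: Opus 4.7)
The plan is to prove both directions separately. The upper bound $\SR(A) \le \sum_i d_i^2$ for $A := \bigoplus_i \End(k^{d_i})$ is immediate from the definition: taking $V_1 = 0$ and $V_2 = V_3 = A$, the trilinear form $T_A$ vanishes trivially on $V_1 \times V_2 \times V_3$, and $\codim V_1 + \codim V_2 + \codim V_3 = \dim A = \sum_i d_i^2$.

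For the lower bound I would induct on the number of simple summands $m$. The base case $m = 1$ requires showing $\SR(\End(k^d)) \ge d^2$: given subspaces $V_1, V_2, V_3 \le \End(k^d)$ with $\Tr(XYZ) = 0$ on $V_1 \times V_2 \times V_3$, nondegeneracy of the trace pairing translates the condition to $V_1 V_2 \subseteq V_3^{\perp}$, so $\codim V_3 \ge \dim(V_1 V_2)$. Combined with the matrix subspace inequality $\dim(V_1 V_2) \ge \dim V_1 + \dim V_2 - d^2$, this gives $\codim V_1 + \codim V_2 + \codim V_3 \ge d^2$ after rearranging. This matrix subspace inequality is the key technical ingredient and reflects the simplicity of $\End(k^d)$; it can be verified by a direct rank analysis, for instance by exploiting that any subspace of $M_d$ of dimension exceeding $d^2 - d$ contains an invertible element (Flanders' theorem).

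For the inductive step, write $A = A' \oplus A_m$ with $A' = \bigoplus_{i=1}^{m-1} \End(k^{d_i})$, and let $\pi', \pi_m$ denote the projections onto the two summands. For each $j \in \{1,2,3\}$ set $\alpha_j = \dim(V_j \cap A')$, $\beta_j = \dim(V_j \cap A_m)$, $\gamma_j = \dim \pi'(V_j)$, and $\delta_j = \dim \pi_m(V_j)$; the dimensional identities $\dim V_j = \alpha_j + \delta_j = \beta_j + \gamma_j$ hold, as does $\gamma_j + \delta_j \ge \dim V_j$ (because $v \mapsto (\pi' v, \pi_m v)$ embeds $V_j$ into $\pi'(V_j) \oplus \pi_m(V_j)$). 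Because $A' \cdot A_m = 0$, the tensor splits as $T_A(X, Y, Z) = T_{A'}(\pi' X, \pi' Y, \pi' Z) + T_{A_m}(\pi_m X, \pi_m Y, \pi_m Z)$. For each $j$, restricting the $j$-th argument to $V_j \cap A'$ (whose $\pi_m$ vanishes) shows that $T_{A'}$ vanishes on the product obtained from $\pi'(V_1) \times \pi'(V_2) \times \pi'(V_3)$ by replacing $\pi'(V_j)$ with $V_j \cap A'$; applying the inductive hypothesis $\SR(T_{A'}) \ge \dim A'$ to each of these three configurations and summing yields $\sum_j \alpha_j + 2 \sum_j \gamma_j \le 6 \dim A'$. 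The analogous argument for $A_m$ via the base case gives $\sum_j \beta_j + 2 \sum_j \delta_j \le 6 \dim A_m$. Adding these, substituting $\alpha_j = \dim V_j - \delta_j$ and $\beta_j = \dim V_j - \gamma_j$, and invoking $\sum_j(\gamma_j + \delta_j) \ge \sum_j \dim V_j$ collapses to $3 \sum_j \dim V_j \le 6 \dim A$, whence $\sum_j \codim V_j \ge \dim A = \sum_i d_i^2$. The main obstacle throughout is the base-case matrix subspace inequality; the inductive step itself is a careful dimension count.
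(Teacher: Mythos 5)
The paper does not actually prove this lemma --- it is quoted from \cite{blasiak2017groups} --- so there is no internal proof to compare against, and your argument has to stand on its own. Its architecture is sound. The upper bound is correct, with the caveat that you are (rightly) using the characterization of slice rank as the \emph{minimum} of $\sum_j \codim V_j$ over vanishing triples; the displayed formula in the introduction says $\max$, which must be a typo, since that maximum is always $\sum_i \dim U_i$. The same minimum characterization is what licenses your appeal to the inductive hypothesis on specific vanishing configurations, so it is worth stating explicitly. I checked the bookkeeping in the inductive step and it closes: the three configurations for $T_{A'}$ give $\sum_j \alpha_j + 2\sum_j \gamma_j \le 6\dim A'$, the analogous bound holds for $A_m$, and substituting $\alpha_j+\beta_j = 2\dim V_j - (\gamma_j+\delta_j)$ together with $\gamma_j + \delta_j \ge \dim V_j$ yields $\sum_j \dim V_j \le 2\dim A$, i.e.\ $\sum_j \codim V_j \ge \dim A$. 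One small point of hygiene: identifying $T_{\End(k^d)}$ with the form $\Tr(XYZ)$ on three copies of $\End(k^d)$ uses the nondegeneracy of the trace form to identify the third tensor factor $A$ with $A^*$; this is harmless (apply \Cref{gln} to an isomorphism) but should be said.

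The one genuine gap is the base case, which you correctly flag as the crux. The inequality $\dim \mathrm{span}(V_1V_2) \ge \dim V_1 + \dim V_2 - d^2$ is \emph{equivalent} to $\SR(\End(k^d)) \ge d^2$ (take $V_3 = \mathrm{span}(V_1V_2)^{\perp}$), so asserting it is essentially asserting the $m=1$ case of the lemma. Moreover, the specific tool you invoke --- that a subspace of dimension exceeding $d^2-d$ contains an invertible matrix --- does not suffice on its own: it disposes of the case $\dim V_2 > d^2 - d$, but when both $V_1$ and $V_2$ consist entirely of singular matrices it gives nothing. What does work is the full Flanders bound. Let $s$ be the maximal rank of an element $Y \in V_2$; Flanders' theorem gives $\dim V_2 \le ds$, while the kernel of the map $X \mapsto XY$ is exactly the set of $X$ vanishing on $\mathrm{im}(Y)$, a space of dimension $d(d-s)$. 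Hence
\[\dim \mathrm{span}(V_1V_2) \ \ge\ \dim (V_1 Y) \ \ge\ \dim V_1 - d(d-s) \ \ge\ \dim V_1 + \dim V_2 - d^2.\]
You should also note that Flanders' theorem carries a field-size hypothesis, so this argument as written covers infinite (in particular algebraically closed) fields --- which is all the paper ever uses --- whereas the lemma as stated claims all fields; for small finite fields one needs Meshulam's strengthening or a different argument.
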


The proof of \Cref{main} will go as follows. By \Cref{keylem}, $k[G]/\J(k[G])$ is a direct sum of matrix algebras, one for each irreducible representation of $k[G]$. So by \Cref{mmsr}, $k[G]/\J(k[G])$ has full slice rank, and by \Cref{quotients} this is a lower bound on the slice rank of $k[G]$. So if we can show that there are many sufficiently large irreps of $G$, we conclude that $\SR(k[G])$ is large. To give an example of when this fails dramatically, when $G$ is any $p$-group, the only irrep of $G$ when $k$ has characteristic $p$ is the trivial one \cite[Corollary of Proposition 26]{serre1977linear}, so this argument only says that $\SR(k[G]) \ge 1$.

\begin{proof}[Proof of \Cref{main}]
Let $G = \PSL(2,p)$ and let $k$ be a field of characteristic $\ell$. First, if $\ell=0$ or $\ell$ is coprime to $|G| = (p-1)p(p+1)/2$, then $k[G]$ is semisimple and so by \Cref{mmsr} the slice rank of $k[G]$ equals $|G| = \Omega(p^3)$. Next, if $\ell = p$, then the irreps of $\SL(2,p)$ are given by the action of $G$ on homogeneous polynomials in two variables of degree up to $p-1$  with coefficients in $k$ \cite[p.~15]{alperin1993local}; since the center of $\SL(2,p)$ acts trivially on even degree polynomials, these are also irreps of $\PSL(2,p)$ (in fact, all of them). By \Cref{keylem} the dimension of $k[G]/\J(k[G])$ is then  $\sum_{i=0}^{(p-1)/2} (2i+1)^2 \ge \Omega(p^3)$, so the claim holds.

So suppose $\ell \neq p$ divides $|G| = (p-1)p(p+1)/2$. By \cite[I.3 Theorem 2]{alperin1993local}, the number of irreducible representations of $G$ equals the number of conjugacy classes having order coprime to $\ell$. Next we show that there are $\Omega(p)$ such conjugacy classes of $G$. This follows from the more general bound of \cite[Theorem 6.1]{hung2022p}; here we sketch a proof for the special case of $\PSL(2,p)$. See \cite[p.~71]{fulton2013representation} for a reference on conjugacy classes of $\SL(2,p)$, which we adapt to $\PSL(2,p)$. Most elements in $G$ are either conjugate to an element of the \emph{split torus}, a cyclic subgroup of order $(p-1)/2$, or the \emph{non-split torus}, a cyclic subgroup of order $(p+1)/2$. The number of non-conjugate elements in the split torus is at least $(p-3)/4$, and the number of non-conjugate elements in the non-split torus is at least $(p-5)/4$ (with the exact values depending on $p \bmod 4$). Because the orders of these tori are coprime, all conjugacy classes of elements in at least one of the subgroups have order coprime to $\ell$. So there are at least $(p-5)/4$ such conjugacy classes. 

Finally, since the minimum dimension of a nontrivial irrep of $G$ is at least $(p-1)/2$ in characteristic $\ell \neq p$ \cite{landazuri1974minimal}, we conclude by \Cref{keylem} that $\dim k[G]/\J(k[G]) \ge \Omega(p^3)$, and thus by \Cref{mmsr} $\SR(k[G]) \ge \Omega(p^3)$.
\end{proof}

One might wonder if all sufficiently quasirandom groups (groups with no small nontrivial irreps over $\C$) have high slice rank. Here is a conjecture towards this question.
\begin{conjecture}
For a fixed $\varepsilon > 0$, let $G$ be a group of order $n$ that is $n^\varepsilon$-quasirandom. Then for all fields $k$, we have the uniform bound of $\dim k[G]/\J(k[G])\ge \Omega(n)$.
\end{conjecture}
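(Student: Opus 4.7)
The plan is to reduce to the modular case and then combine Brauer's block theory with structural results about quasirandom groups. If $\text{char}(k) = 0$ or $\text{char}(k)$ is coprime to $|G|$, Maschke's theorem gives that $k[G]$ is semisimple, so $\J(k[G]) = 0$ and $\dim k[G]/\J(k[G]) = n$ directly. The substantive case is thus $\text{char}(k) = \ell$ dividing $|G|$. Note that the hypothesis forces $G$ to be perfect: an $n^\varepsilon$-quasirandom group has no nontrivial one-dimensional complex representation for $n$ sufficiently large, and since one-dimensional representations over any field factor through the abelianization $G/[G,G]$, the group $G$ has no nontrivial one-dimensional representation over $k$ either. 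In particular every nontrivial modular irrep has dimension at least two.

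For the modular case I would work with the identity $\dim k[G]/\J(k[G]) = \sum_U (\dim U)^2$ over modular irreducibles $U$, combined with two complementary tools. First, \emph{defect theory}: an ordinary irrep $V$ with $|G|_\ell$ dividing $\dim V$ lies in a block of defect zero, remains irreducible modulo $\ell$, is its own projective cover, and therefore contributes $(\dim V)^2$ in full to $\dim k[G]/\J(k[G])$. Quasirandomness gives $\dim V \ge n^\varepsilon$ for all nontrivial ordinary $V$, so whenever $|G|_\ell$ is small relative to $n^\varepsilon$ a counting argument using $|G| = \sum_V (\dim V)^2$ shows that defect-zero blocks account for $\Omega(n)$ of the total. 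Second, a \emph{modular Landazuri--Seitz bound} (Guralnick--Tiep and collaborators): for any nonabelian finite simple group, the minimum dimension of a nontrivial irreducible representation in any characteristic is polynomially large in the group's order. Combined with a lower bound on the number of $\ell$-regular conjugacy classes (which equals the number of modular irreps), this yields the desired estimate when $G$ is simple or close to simple.

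The hard case is when $|G|_\ell$ is a substantial power of $n$ and $G$ is not itself close to simple: there may be a nontrivial solvable radical or small nonsimple composition factors that quasirandomness alone does not immediately rule out. Here I would invoke structural results in the spirit of Nikolov--Pyber showing that sufficiently quasirandom groups are close to products of nonabelian simple groups, reducing to the simple case above. The main obstacle is making this reduction quantitative in a field-independent way while preserving the quasirandomness parameter $\varepsilon$, and then propagating the modular dimension bound through composition factors uniformly in $\ell$. A complete proof would most likely rely on the classification of finite simple groups together with a careful blockwise induction.
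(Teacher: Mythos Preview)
The statement you are addressing is presented in the paper as an open \emph{conjecture}; the paper offers no proof. There is therefore nothing to compare against---the author proves the slice rank lower bound only for the specific family $\PSL(2,p)$, by essentially the method you sketch in your second paragraph (count $\ell$-regular conjugacy classes and invoke the Landazuri--Seitz bound), and then poses the general quasirandom case as a question.

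Your outline is a plausible strategy but not a proof, and you effectively concede as much in your final paragraph. One concrete gap worth flagging: in the defect-zero step you slide from ``$\dim V \ge n^\varepsilon$ and $|G|_\ell$ is small relative to $n^\varepsilon$'' to the conclusion that enough ordinary characters lie in blocks of defect zero. But defect zero requires $|G|_\ell \mid \dim V$, not merely $|G|_\ell \le \dim V$; large dimension gives no divisibility information, and the identity $|G| = \sum_V (\dim V)^2$ does not by itself isolate an $\Omega(n)$ contribution from characters of $\ell$-defect zero. The second route---a modular Landazuri--Seitz--type bound together with a lower bound on the number of $\ell$-regular classes---is precisely what the paper executes for $\PSL(2,p)$, but extending it to an arbitrary $n^\varepsilon$-quasirandom $G$ needs the structural reduction to products of simple groups that you yourself identify as the main obstacle, and it is not clear that the Nikolov--Pyber machinery delivers this reduction with the uniformity in $\ell$ that you need. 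So the proposal names reasonable ingredients but does not settle the problem; the statement remains a conjecture.
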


\section{Acknowledgments}
I thank Ryan O'Donnell for feedback on a previous draft, and Josh Grochow for helpful clarifications.
\bibliographystyle{amsalpha}
        \bibliography{refs}

\newcommand{\etalchar}[1]{$^{#1}$}
\providecommand{\bysame}{\leavevmode\hbox to3em{\hrulefill}\thinspace}
\providecommand{\MR}{\relax\ifhmode\unskip\space\fi MR }
\providecommand{\MRhref}[2]{%
  \href{http://www.ams.org/mathscinet-getitem?mr=#1}{#2}
}
\providecommand{\href}[2]{#2}
\begin{thebibliography}{EGH{\etalchar{+}}11}

\bibitem[Alp93]{alperin1993local}
Jonathan~L Alperin, \emph{Local representation theory: Modular representations
  as an introduction to the local representation theory of finite groups},
  Cambridge University Press, 1993.

\bibitem[BCC{\etalchar{+}}17]{blasiak2017groups}
Jonah Blasiak, Thomas Church, Henry Cohn, Joshua~A Grochow, and Chris Umans,
  \emph{Which groups are amenable to proving exponent two for matrix
  multiplication?}, arXiv preprint arXiv:1712.02302 (2017).

\bibitem[CKSU05]{cohn2005group}
Henry Cohn, Robert Kleinberg, Bal{\'a}zs Szegedy, and Christopher Umans,
  \emph{Group-theoretic algorithms for matrix multiplication}, Proceedings of
  the 46th Annual Symposium on Foundations of Computer Science {(FOCS} 2005),
  {IEEE} Computer Society, 2005, pp.~379--388.

\bibitem[CLP17]{croot2017progression}
Ernie Croot, Vsevolod~F Lev, and P{\'e}ter~P{\'a}l Pach, \emph{Progression-free
  sets in are exponentially small}, Annals of Mathematics (2017), 331--337.

\bibitem[EG17]{ellenberg2017large}
Jordan~S Ellenberg and Dion Gijswijt, \emph{On large subsets of with no
  three-term arithmetic progression}, Annals of Mathematics (2017), 339--343.

\bibitem[EGH{\etalchar{+}}11]{etingof2011introduction}
Pavel~I Etingof, Oleg Golberg, Sebastian Hensel, Tiankai Liu, Alex Schwendner,
  Dmitry Vaintrob, and Elena Yudovina, \emph{Introduction to representation
  theory}, vol.~59, American Mathematical Soc., 2011.

\bibitem[FH13]{fulton2013representation}
William Fulton and Joe Harris, \emph{Representation theory: a first course},
  vol. 129, Springer Science \& Business Media, 2013.

\bibitem[Gow08]{gowers2008quasirandom}
W.~T. Gowers, \emph{Quasirandom groups}, Combin. Probab. Comput. \textbf{17}
  (2008), no.~3, 363--387.

\bibitem[HM22]{hung2022p}
Nguyen~Ngoc Hung and Attila Mar{\'o}ti, \emph{p-regular conjugacy classes and
  p-rational irreducible characters}, Journal of Algebra \textbf{607} (2022),
  387--425.

\bibitem[KSS16]{kleinberg2016growth}
Robert Kleinberg, Will Sawin, and David~E Speyer, \emph{The growth rate of
  tri-colored sum-free sets}, arXiv preprint arXiv:1607.00047 (2016).

\bibitem[LS74]{landazuri1974minimal}
Vicente Landazuri and Gary~M. Seitz, \emph{On the minimal degrees of projective
  representations of the finite {C}hevalley groups}, J. Algebra \textbf{32}
  (1974), 418--443.

\bibitem[Pet16]{petrov2016combinatorial}
Fedor Petrov, \emph{Combinatorial results implied by many zero divisors in a
  group ring}, arXiv preprint arXiv:1606.03256 (2016).

\bibitem[S{\etalchar{+}}77]{serre1977linear}
Jean-Pierre Serre et~al., \emph{Linear representations of finite groups},
  vol.~42, Springer, 1977.

\bibitem[Tao16]{tao}
Terence Tao, \emph{Symmetric formulation of the
  {C}root-{L}ev-{P}ach-{E}llenberg-{G}ijswijt capset bound}, 2016, Available at
  \url{https://terrytao.wordpress.com/2016/05/18/a-symmetric-formulation-of-the-croot-lev-pach-ellenberg-gijswijt-capset-bound/}.

\bibitem[TS16]{tao2}
Terence Tao and Will Sawin, \emph{Notes on the ``slice rank” of tensors},
  2016, Available at
  \url{https://terrytao.wordpress.com/2016/08/24/notes-on-the-slice-rank-of-tensors/}.

\end{thebibliography}
\end{document}